\renewcommand\eqref[1]{(\ref{#1})} %Need with hyperref
\def\G{{\mathbb{G}}}
\def\HN{{\mathbb{H}^{n}}}
\def\L{{\mathcal{L}}}
\def\R{{\mathbb{R}}}
\def\pl{{\pi_{\lambda}}}
\def\h{{\dot{\mathcal{H}}^{2}_{\mathcal{L}}(\G)}}
\DeclareMathOperator{\Tr}{Tr}
\title[solvability of non-invariant equations  on stratified groups]{Nearness and solvability of non-invariant equations on stratified groups}
 \author[M. Chatzakou]{Marianna Chatzakou}
\address{
	Marianna Chatzakou:
	\endgraf
	Department of Mathematics: Analysis, Logic and Discrete Mathematics
	\endgraf
	Ghent University, Krijgslaan 281, Building S8, B 9000 Ghent
	\endgraf
	Belgium
	\endgraf
	{\it E-mail address} {\rm marianna.chatzakou@ugent.be}}
\author[M. Ruzhansky]{Michael Ruzhansky}
\address{
  Michael Ruzhansky:
  \endgraf
  Department of Mathematics: Analysis, Logic and Discrete Mathematics
  \endgraf
  Ghent University, Belgium
  \endgraf
 and
  \endgraf
  School of Mathematical Sciences
  \endgraf
  Queen Mary University of London
  \endgraf
  United Kingdom
  \endgraf
  {\it E-mail address} {\rm michael.ruzhansky@ugent.be}
  }
  \author[N. Yannakakis]{Nikos Yannakakis}
\address{
	Nikos Yannakakis
	\endgraf
	Department of Mathematics
	\endgraf
	National Technical University of Athens
 \endgraf
 Iroon Polytexneiou 9, 15780 Zografou, Greece
	\endgraf
	{\it E-mail address} {\rm  nyian@math.ntua.gr}}
\newtheoremstyle{theorem}%name
{10pt}          % space above
{10pt}  % space below
{\sl}  % bofy font
{\parindent}     % ident - empty=no indent,  \parindent= paragraph indent
{\bf}  % thm head font
{. }    % punctuation after thm head
{ }    % space after thm head: `` ``=normal \newline=linebreak
{}     % thm head specification
\theoremstyle{theorem}
\numberwithin{equation}{section}
\theoremstyle{plain}
\newtheorem{thm}{Theorem}[section]
\newtheorem{prop}[thm]{Proposition}
\newtheorem{cor}[thm]{Corollary}
\theoremstyle{definition}
\newtheorem{defn}[thm]{Definition}
\newtheorem{rem}[thm]{Remark}
\newtheoremstyle{defi}%name
{10pt}          % space above
{10pt}  % space below
{\rm}  % bofy font
{\parindent}     % ident - empty=no indent,  \parindent= paragraph indent
{\bf}  % thm head font
{. }    % punctuation after thm head
{ }    % space after thm head: `` ``=normal \newline=linebreak
{}     % thm head specification
\theoremstyle{defi}
\begin{document}
 	\begin{abstract}
We prove the well-posedness of the differential equation $Au=f$ in the setting of a stratified group $\G$ when the considered second-order differential operator $A$ can be non-invariant and non-linear. Our approach follows the Campanato theory of nearness of operators, allowing one to treat equations with only bounded coefficients, without any regularity assumptions. Our analysis becomes explicit in the particular case of the Heisenberg group $\HN$ of any dimension and on the Euclidean case $\R^n$, where in the latter case our results also extend the known results by treating the unbounded domain setting.
	\end{abstract}
	\maketitle

\section{Introduction}\label{SEC:intro}
The framework we consider here is that of stratified Lie groups $\G$. In this setting we consider non-invariant differential operators $A:V \rightarrow X $, where $V$ is a set of functions on $\G$, and $X$ is a real Banach space. Given such $A$, we prove the uniqueness and existence of solutions to the corresponding non-linear differential equations on $\G$; i.e., for such a differential operator $A$, we consider an equation of the form 
\[
Au=f\,,\quad f \in X\,,
\]
and prove that it is satisfied for some unique $u \in V$. The approach that we follow is to prove that $A$ is, in some appropriate sense, ``close enough'' to an operator for which we know the well-posedeness of the corresponding differential equation.

This notion of ``closeness'', formally defined as \textit{nearness of operators}, was developed to determine the well-posedeness of Dirichlet problems of non-linear elliptic equations, see e.g. \cite{GT01}, \cite{DKY20}, \cite{BD02}, \cite{K17} in the Euclidean setting, where naturally the Laplacian should play the role of the operator to which close enough is the elliptic one. In the stratified case, see \cite{DM05}, the authors approximate the $p$-Laplacian on a bounded domain in the Heisenberg group $\HN$ by some non-homogeneous, non-invariant differential operator on $\HN$. In analogy to the Euclidean setting,  the operator that we consider is the hypoelliptic sub-Laplacian on the considered stratified group $\G$; i.e., sum of squares of H\"ormander vector fields on $\G$.

The aforementioned problem is related to the following question: For $V,X$ as above, let $A: V \rightarrow X$ be a mapping. Then, what do we know about the injectivity, surjectivity, or bijectivity of $A$? In response to this question, Campanato in \cite{C94} introduced the notion of \textit{near maps}; see \cite[Definition 1]{C94}, which reads as follows: 
\begin{defn}\label{DFN:near}
    We say that $A$ is near $B$ if there exist two positive constants $\alpha,K>0$, with $K \in (0,1)$, such that for all $u,v \in V$ we have 
    \[
    \|Bu-Bv-\alpha[Au-Av]\|_{X}\leq K \|Bu-Bv\|_{X}\,.
    \]
\end{defn}
 Then, questions about the nature of $A$ can be reduced to the same questions about the nature of $B$ presuming that the latter has ``better'' properties, and in particular is invertible. Campanato proved that the  injectivity, surjectivity or bijectivity property of an operator is preserved from operators near them. Formally, we have:
 \begin{thm}[\cite{C94}, Theorem 1]\label{thm.bij}
     If the operator $B:V\rightarrow X$ is bijective, surjective or injective, and the operator $B$ is near to the operator $A$, then the operator $A$ is bijective, surjective, or injective, respectively.
 \end{thm}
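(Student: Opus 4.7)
The plan is to dispatch the three assertions separately, in order of increasing difficulty. First, injectivity is essentially immediate from Definition \ref{DFN:near}: if $u,v \in V$ satisfy $Au = Av$, then $Bu - Bv - \alpha[Au - Av] = Bu - Bv$, and the nearness inequality reads $\|Bu - Bv\|_X \leq K \|Bu - Bv\|_X$ with $K < 1$, forcing $Bu = Bv$, hence $u = v$ by injectivity of $B$.

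For bijectivity, the strategy is to recast $Au = f$, for arbitrary $f \in X$, as a fixed-point problem. Note that $Au = f$ is equivalent to $Bu = Bu - \alpha A u + \alpha f$. Assuming $B$ is bijective, let $B^{-1}: X \to V$ be its inverse and define $T: V \to V$ by $T(u) = B^{-1}\bigl(Bu - \alpha Au + \alpha f\bigr)$. I would endow $V$ with the pulled-back metric $d_V(u,v) := \|Bu - Bv\|_X$, which is a genuine metric because $B$ is injective and which turns $B$ into an isometric bijection onto the Banach space $X$; in particular, $(V, d_V)$ is complete. A short computation gives
\[
d_V(T(u), T(v)) = \bigl\|(Bu - Bv) - \alpha(Au - Av)\bigr\|_X \leq K \, d_V(u,v),
\]
by Definition \ref{DFN:near}, so $T$ is a $K$-contraction on a complete metric space. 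Banach's fixed-point theorem produces a unique $u^{*} \in V$ with $T(u^{*}) = u^{*}$, which unwinds to $Au^{*} = f$. Combined with the injectivity step, this proves bijectivity of $A$.

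For surjectivity, where $B$ is only assumed surjective, I would pass to the quotient. The nearness inequality applied to any $u,v$ with $Bu = Bv$ yields $Au = Av$, so the relation $u \sim v \iff Bu = Bv$ is a congruence for both $A$ and $B$. The induced maps $\bar B, \bar A: V/{\sim} \to X$ are well-defined, $\bar B$ is bijective, and $\bar A$ is near $\bar B$ with the same constants $\alpha, K$. The already established bijective case applied to $(\bar A, \bar B)$ gives surjectivity of $\bar A$, which lifts trivially to surjectivity of $A$ on $V$.

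The only subtle point I anticipate is the bookkeeping around the metric $d_V$: since $V$ comes with no a priori topological or linear structure, one must observe that the hypotheses on $B$ alone suffice to pull back the Banach structure of $X$ and thereby power the contraction argument. Once that is recognized, each of the three cases reduces to a single line of estimates. Verifying that $\bar A$ inherits the nearness property in the surjectivity step, and that $(V, d_V)$ is complete in the bijective step, are routine but should be written out carefully to make the reduction transparent.
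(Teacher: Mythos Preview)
The paper does not actually prove Theorem \ref{thm.bij}; it is quoted from Campanato's original paper \cite{C94} and used as a black box in the proof of Theorem \ref{theorem1}. Your argument is correct and is essentially Campanato's own: the pulled-back metric $d_V(u,v)=\|Bu-Bv\|_X$ and the contraction $T(u)=B^{-1}(Bu-\alpha Au+\alpha f)$ are precisely the devices introduced in \cite{C94}, and the quotient reduction for the surjective-only case is the standard closing step. It is worth noting that the paper deploys exactly your contraction map, with $B=\mathcal{L}$ and a specific weight $c(x)$, in its direct proof of Theorem \ref{theorem2}, so your method is fully consonant with the paper's techniques even though no proof of Theorem \ref{thm.bij} appears there.
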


For our purposes, it is more convenient to use the following  characterisation of nearness of operators.
\begin{prop}[\cite{DKY20}, Proposition 1]\label{prop.near}
    The operator $A$ is near the operator $B$ if there exist positive constants $\alpha, M$ and $\mu$ with $\mu<1+ \alpha M$ such that 
    \begin{equation}
        \label{near.1}
        \|Au-Av\| \geq M \|Bu-Bv\|\,,
    \end{equation}
    and 
    \begin{eqnarray}
        \label{near.2}
         \|Bu-Bv-\alpha (Au-Av)\| \leq \mu  \|Bu-Bv\|\,,
    \end{eqnarray}
    for all $u,v$. 
\end{prop}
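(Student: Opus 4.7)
The plan is to verify Definition~\ref{DFN:near} by exhibiting $\widetilde{\alpha}>0$ and $\widetilde{K}\in(0,1)$ such that $\|Bu-Bv-\widetilde{\alpha}(Au-Av)\|_{X}\le \widetilde{K}\|Bu-Bv\|_{X}$ for all $u,v\in V$, starting from the hypotheses \eqref{near.1}, \eqref{near.2} and the strict inequality $\mu<1+\alpha M$. The most immediate attempt is to leave the scaling alone and take $\widetilde{\alpha}=\alpha$, $\widetilde{K}=\mu$. This succeeds at once in the easy regime $\mu<1$, where \eqref{near.2} is literally the required contraction. The interesting case is therefore $\mu\in[1,\,1+\alpha M)$, a range which is non-empty precisely because $\alpha M>0$.

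In that regime I would rescale, setting $\widetilde{\alpha}=s\alpha$ for some $s>0$ to be optimised, and use the convex-combination identity
\[
Bu-Bv-s\alpha(Au-Av)=(1-s)(Bu-Bv)+s\bigl[Bu-Bv-\alpha(Au-Av)\bigr],
\]
which together with \eqref{near.2} yields $\|Bu-Bv-s\alpha(Au-Av)\|_{X}\le(|1-s|+s\mu)\|Bu-Bv\|_{X}$. Since this plain bound is itself not contractive once $\mu\ge 1$, I would then bring in the coercivity estimate \eqref{near.1} to refine it; note that the complementary upper bound $\alpha\|Au-Av\|_{X}\le(1+\mu)\|Bu-Bv\|_{X}$ comes for free from \eqref{near.2} by the triangle inequality. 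Combining these ingredients and optimising over $s$, the strict inequality $\mu<1+\alpha M$ is exactly what guarantees the existence of a non-empty window of values of $s$ for which the resulting constant $\widetilde{K}$ stays strictly below $1$, and any $s$ in that window yields explicit parameters verifying Definition~\ref{DFN:near}.

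The step I expect to be the main obstacle is carrying out this balancing in a general Banach space: without an inner product one cannot project $Bu-Bv$ onto the line spanned by $Au-Av$ and read off the optimal $s$ in the Hilbert-space manner. The refinement has to be produced purely at the level of norms, by a careful interplay of triangle and reverse-triangle inequalities that uses \eqref{near.1} to control $\|Au-Av\|_{X}$ from below, \eqref{near.2} to pin its magnitude from above, and the strict bound $\mu<1+\alpha M$ to keep the resulting estimate a genuine contraction.
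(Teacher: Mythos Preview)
The paper does not supply its own proof of this proposition; it is quoted from \cite{DKY20}, so there is no in-paper argument to compare your attempt against.

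Your treatment of the range $\mu<1$ is correct: \eqref{near.2} is then already the contraction of Definition~\ref{DFN:near} with $\widetilde\alpha=\alpha$ and $\widetilde K=\mu$. The difficulty you flag for $\mu\ge 1$, however, is not merely a technical obstacle in the Banach-space setting; it is fatal to the rescaling strategy even in a Hilbert space. Take $V=X=\mathbb R^2$ with the Euclidean norm, $B=I$, and $A$ the rotation by $\pi/2$. Then $\|Aw\|=\|w\|$ for all $w$, so \eqref{near.1} holds with $M=1$; and $\|(I-A)w\|=\sqrt2\,\|w\|$ for all $w$, so \eqref{near.2} holds with $\alpha=1$ and $\mu=\sqrt2$. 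The numerical hypothesis reads $\sqrt2<2=1+\alpha M$, which is satisfied. Yet for every $t>0$ one has $\|(I-tA)w\|=\sqrt{1+t^2}\,\|w\|\ge\|w\|$, so no choice of $\widetilde\alpha>0$ produces the contraction required by Definition~\ref{DFN:near}. The ``careful interplay of triangle and reverse-triangle inequalities'' you are hoping for therefore cannot exist: the conclusion you are trying to reach is simply false for these data.

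This indicates that either the statement has been transcribed here with a looser threshold than in \cite{DKY20} (the Hilbert-space computation your outline points towards does go through under the sharper condition $\mu^2<1+\alpha^2M^2$, which excludes the rotation example), or the proposition in \cite{DKY20} delivers a conclusion other than literal nearness in the sense of Definition~\ref{DFN:near}---for instance a direct transfer of bijectivity, which the rotation example does not violate and which is all the paper actually uses downstream. You should consult \cite{DKY20} for the precise formulation before investing further effort in the case $\mu\ge 1$.
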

\begin{rem}
    Notice that condition \eqref{near.2} is less restrictive than the one in Definition \ref{DFN:near} since $\mu$ does not have to be in the range $(0,1)$.
\end{rem}
Since, as mentioned earlier, the well-posedenss of the differential equations that we consider here is proved in terms of nearness to the analogue of the Laplacian  $\L$ in our sub-Riemmanian setting. For this, we will use the hypelliptic  counterpart of the Miranda-Talenti estimate on  $\G$, see \cite[Lemma 1.2.2]{MPS00}, in the stratified setting $\G$. The latter reads as follows:
\begin{equation}\label{MT}
\|X_iX_j u\|_{L^2(\G)} \leq C_{ij} \|\L u\|_{L^2(\G)}\,, \quad C_{ij}>0\,,    
\end{equation}
where $C_{ij}>0$ depends on the H\"ormander vertor fields $X_1,\ldots,X_N$ (and so on the choice of $\G$), and is a consequence of \cite[Theorem 4.4.16(2)]{FR16}.  In what follows we will denote by $C_0$ the quantity
\begin{equation}\label{C0}
    C_0=\left(\sum_{1\leq i,j \leq N}C_{ij}^{2} \right)^{1/2}\,,
\end{equation}
where each $C_{ij}$ corresponds to the Miranda-Talenti estimate \eqref{MT} for the vector fields $X_i,X_j$ that generate the Lie algebra of $\G$. In the case where $\G=\R^n$ and $N=n$, we have $C_{ij}=1$, see c.f. \cite[Lemma 1.2.2]{MPS00}, which in turn implies that $C_0=n$.

The main result of this paper, see Theorem \ref{theorem1} below, concerns the well-posedeness of an equation with an associated differential operator expressed in terms of a function of the form
\[
a: \G \times \R^{N^2} \rightarrow \R\,,
\]
for a stratified group $\G$. This type of problem was previously considered in \cite[Theorem 3]{DKY20},  where the authors also employ the method of nearness of operators, but in \cite{DKY20} the space variable $x$ lies in $\Omega$, where $\Omega \subset \mathbb{R}^n$ is an open, bounded and convex domain of $\mathbb{R}^n$ with a regular enough boundary. In this sense, our result is already new in the Euclidean setting.

Let us now describe the assumptions on the differential operator that we consider: For a fixed stratified group $\G$, $a$ is a  Caratheodory function; i.e., measurable with respect to $x \in \G$, for all $\xi \in \R^{N^2}$, and continuous with respect to $\xi \in \R^{N^2}$ for all $x \in \G$. Additionally, we assume that there exist positive constants $M, \alpha, \gamma$ and $\delta$ such that $\sqrt{(\gamma+\delta)(\gamma C_{0}^{2}+\delta)}<1+\alpha M$, for $C_0$ as in \eqref{C0}, and the following two conditions are satisfied
\[
|a(x,\xi+\tau)-a(x,\xi)|\geq M |\Tr \tau|\tag{C1}\,,
\]
\[
|\Tr \tau -\alpha (a(x,\xi+\tau)-a(x,\xi))|\leq \gamma \|\tau\|_{\R^{N^2}}+\delta |\Tr \tau|\,,\tag{C2}
\]
for all $\xi, \tau \in \R^{N^2}$ and for almost all $x \in \G$, where $\Tr \tau$ and $\|\tau\|_{\R^{N^2}}$ stand for the trace and for the Euclidean norm of $\tau$, respectively.

In this paper, we will extensively use the homogeneous Sobolev space $\h$ which is defined as the completion of the Schwartz space $\mathcal{S}(\G)$ for the Sobolev norm \eqref{EQ:Sob}, see e.g.  \cite[Definition 4.4.12]{FR16} for its definition and further properties, and also Section \ref{SEC:2}.

Here is the first main result of this paper:

\begin{thm}\label{theorem1}
   Let $\G$ be a stratified group with $Q\geq 3$, and let $\{X_j\}_{j=1}^{N}$ be a H\"ormander system of vector fields on $\G$. If $a: \G \times \R^{N^2} \rightarrow \R$ satisfies the conditions $(C1)$ and $(C2)$, then the equation 
    \[
    a(x, \{X_iX_j\}_{i,j=1}^{N}u)=f\,, \quad f \in L^2(\G)\,,
    \]
    has a unique solution $u \in \h$.
\end{thm}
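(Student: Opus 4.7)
The plan is to apply Proposition \ref{prop.near} with $B=\L$ and $A$ the Nemytskii-type operator defined pointwise by $Au(x)=a(x,\{X_iX_j u(x)\}_{i,j=1}^N)$, both viewed as maps from $V=\h$ into $X=L^2(\G)$, and then to invoke Theorem \ref{thm.bij}. First, I would note that $\L\colon \h\to L^2(\G)$ is bijective: this is essentially built into the definition of the homogeneous Sobolev space as the completion with respect to $\|\L\,\cdot\|_{L^2}$, and the assumption $Q\geq 3$ ensures that $\L$ admits a well-behaved homogeneous fundamental solution, so that its inverse is available on $L^2(\G)$. Well-definedness of $A$ as a map into $L^2(\G)$ follows by taking $\xi=0$ in (C2), which yields a pointwise bound $|a(x,\tau)|\leq |a(x,0)|+\alpha^{-1}\bigl((1+\delta)|\Tr\tau|+\gamma\|\tau\|_{\R^{N^2}}\bigr)$; combined with the Miranda-Talenti estimate \eqref{MT}, this places $Au$ in $L^2(\G)$ whenever $u\in \h$.

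The second step is to verify \eqref{near.1}. For $u,v\in\h$, write $w=u-v$, and set $\xi=\{X_iX_j v\}_{i,j}$ and $\tau=\{X_iX_j w\}_{i,j}$. The crucial algebraic identity is $\Tr\tau=\sum_{i=1}^N X_iX_i w=\L w$. Applying (C1) pointwise and integrating the square immediately gives
\[
\|Au-Av\|_{L^2(\G)}\geq M\|\L u-\L v\|_{L^2(\G)},
\]
which is exactly \eqref{near.1} with constant $M$.

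The heart of the argument is \eqref{near.2}. The same substitution in (C2) produces the pointwise inequality
\[
\bigl|\L w(x)-\alpha(Au-Av)(x)\bigr|\leq \gamma\Bigl(\sum_{i,j}|X_iX_j w(x)|^2\Bigr)^{1/2}+\delta\,|\L w(x)|.
\]
Here the key analytic step is to square this bound and apply the Cauchy-Schwarz inequality in the form
\[
(\gamma G+\delta H)^2=\bigl(\sqrt{\gamma}\cdot\sqrt{\gamma}\,G+\sqrt{\delta}\cdot\sqrt{\delta}\,H\bigr)^2\leq (\gamma+\delta)(\gamma G^2+\delta H^2),
\]
with $G(x)=(\sum_{i,j}|X_iX_j w(x)|^2)^{1/2}$ and $H(x)=|\L w(x)|$. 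Integrating, and then invoking \eqref{MT} in the summed form $\sum_{i,j}\|X_iX_j w\|_{L^2}^2\leq C_0^2\|\L w\|_{L^2}^2$, I would obtain
\[
\|\L w-\alpha(Au-Av)\|_{L^2(\G)}^2\leq (\gamma+\delta)\bigl(\gamma C_0^2+\delta\bigr)\|\L w\|_{L^2(\G)}^2,
\]
i.e.\ \eqref{near.2} with $\mu=\sqrt{(\gamma+\delta)(\gamma C_0^2+\delta)}$, which is strictly less than $1+\alpha M$ by hypothesis.

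Combining these estimates, Proposition \ref{prop.near} asserts that $A$ is near $\L$, and Theorem \ref{thm.bij} then transfers bijectivity from $\L$ to $A$, giving unique solvability of $Au=f$ in $\h$ for every $f\in L^2(\G)$. The main obstacle, and the reason the hypothesis is stated in the somewhat unusual form $\sqrt{(\gamma+\delta)(\gamma C_0^2+\delta)}<1+\alpha M$, lies in organizing the pointwise bound from (C2) so that the two distinct ingredients --- the full Hessian norm $\|\tau\|_{\R^{N^2}}$, which is controlled only after summing Miranda-Talenti over all pairs $(i,j)$ and costs a factor $C_0^2$, and the trace $|\Tr\tau|$, which is directly $|\L w|$ --- combine optimally. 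The Cauchy-Schwarz splitting above is precisely what produces this sharp constant; any cruder use of the triangle inequality would give a weaker (and non-matching) condition.
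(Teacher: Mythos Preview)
Your proposal is correct and follows essentially the same route as the paper: the same substitution $\tau=\{X_iX_j(u-v)\}$, $\xi=\{X_iX_jv\}$ with $\Tr\tau=\L(u-v)$, the same pointwise use of (C1) and (C2), the same squaring via $(\gamma G+\delta H)^2\leq(\gamma+\delta)(\gamma G^2+\delta H^2)$, and the same appeal to \eqref{MT} and Proposition~\ref{prop.near}. Your additional remark on the well-definedness of $A$ into $L^2(\G)$ via (C2) with $\xi=0$ is a detail the paper leaves implicit.
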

% Here $\h$ is the complete subspace of tempered distribution defined later on in Section \ref{SEC:2}. 

Note that conditions (C1) and (C2) on the matrix $a$ generalise the nonlinear Cordes condition introduced by Campanato \cite{C89a}. See e.g., \cite{T04} for a general overview of stronger and weaker variations of the Cordes condition, and their relation to Campanato theory of near operators. 

The second main result of this paper, see Theorem \ref{theorem2}  below,  is to show the well-posedeness of the differential equation 
\begin{equation*} 
     Au:=   \sum_{i,j=1}^{N}c_{ij}(x) X_i X_j u=f\,, 
\end{equation*}
on a stratified group $\G$ and for some suitable $c_{ij} \in L^{\infty}(\G)$, that is, if the equation $Au=f$  has a unique solution. 

To motivate our analysis, let us recall the following problem in the Euclidean setting:

Let $\Omega \subset \mathbb{R}^n$, $n>2$, be a bounded set with $\partial \Omega$ sufficiently regular, and let $A=\{a_{ij}\}$ be a real matrix, with $a_{ij} \in L^{\infty}(\Omega).$ Under this considerations, it is well-known, see \cite{P66}, that the Dirchlet problem
\begin{equation}\label{dir}
    \begin{cases}
        u \in H^{2,2}\cap H_{0}^{1,2}(\Omega)\,,\\
    \sum_{i,j=1}^{n}a_{ij}D_iD_ju(x)=f(x)\,,\quad \text{a.e.}\,\,\text{in}\,\, \Omega\,,
    \end{cases}
\end{equation}
for $f \in L^2(\Omega)$ is well-posed under a hypothesis stronger than the uniform ellipticity of $A$. In view of this requirement, Campanato in \cite{C67} imposed the following  condition, introduced by Cordes in \cite{C61}, see also \cite{T65}, on the coefficients $a_{ij}$:
\begin{equation}
    \label{cordes}
    \frac{\left(\sum_{i=1}^{n}a_{ii}(x) \right)^2}{\sum_{i,j=1}^{n}a_{ij}^{2}(x)}\geq n-1+\varepsilon\,, \quad \text{for all}\quad x \in \Omega\,,
\end{equation}
and for some $\varepsilon \in (0,1)$. If condition \eqref{cordes} is satisfied, then we say that the coefficients $(a_{ij})$ satisfy the \textit{Cordes condition} in $\Omega$.

The next theorem shows that the analogue of  \eqref{dir} has a unique solution when considered in the entire stratified group $\G$. The interesting feature of this result is that only the boundedness of coefficients $c_{ij}$ is required, without any regularity.  

\begin{thm}\label{theorem2}
     Let $\G$ be a stratified group with $Q\geq 3$, and let $\{X_j\}_{j=1}^{N}$ be a H\"ormander system of vector fields on $\G$, $f \in L^2(\G)$ and coefficients $(c_{ij}) \in L^{\infty}(\G)$, $i,j=1,\ldots,N$,  that  satisfy 
     the condition
\begin{equation}
    \label{ell}  
    \sum_{i,j=1}^N c_{ij}(x)\xi_i\xi_j\geq c_0|\xi|^2,
\end{equation} 
for some $c_0>0$, and all $x\in\G$ and $\xi\in\mathbb R^N$, and 
     the Cordes condition 
\begin{equation}
    \label{cordes2}
    \frac{\left(\sum_{i=1}^{N}c_{ii}(x) \right)^2}{\sum_{i,j=1}^{N}c_{ij}^{2}(x)}\geq N-1+\varepsilon\,, \quad \text{for all}\quad x \in \G\,,
\end{equation}     
%    \eqref{cordes} on $\G$ 
for some $\varepsilon\in (0,1)$ such that $\sqrt{1-\varepsilon}C_0<1$, for $C_0$ given by \eqref{C0}. Then the equation 
    \begin{equation}
        \label{1}  
         Au:=   \sum_{i,j=1}^{N}c_{ij}(x) X_i X_j u=f
    \end{equation}
    has a unique solution $u \in \h$.
\end{thm}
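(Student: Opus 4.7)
The plan is to invoke Theorem \ref{thm.bij} with $B = \L$, relying on the fact that $\L \colon \h \to L^2(\G)$ is an isometric bijection by the very definition of this homogeneous Sobolev space. The classical Cordes calculation naturally produces the scaling factor $\beta(x) := \bigl(\sum_i c_{ii}(x)\bigr)/\bigl(\sum_{i,j} c_{ij}^2(x)\bigr)$, which depends on $x$, while Proposition \ref{prop.near} demands a constant $\alpha$. My first move is therefore to absorb this scaling into the operator: set $\tilde A u := \beta(x)\, A u = \sum_{i,j}\beta(x) c_{ij}(x)\, X_iX_j u$. Testing \eqref{ell} at $\xi = e_k$ and summing gives $\sum_i c_{ii}(x) \geq c_0 N > 0$; combined with \eqref{cordes2} and the $L^\infty$ bound on $(c_{ij})$, this places both $\beta$ and $1/\beta$ in $L^\infty(\G)$. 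Hence multiplication by $\beta$ is a bijection of $L^2(\G)$, and it suffices to establish bijectivity of $\tilde A$.

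To verify nearness of $\tilde A$ to $\L$ with the constant $\alpha = 1$ in Proposition \ref{prop.near}, I would write, for $w = u - v$,
\[
\L w(x) - \tilde A w(x) \;=\; \sum_{i,j=1}^{N}\bigl(\delta_{ij}-\beta(x) c_{ij}(x)\bigr) X_i X_j w(x),
\]
and apply the pointwise Cauchy-Schwarz inequality in the indices $(i,j)$. The resulting coefficient sum
\[
\sum_{i,j}\bigl(\delta_{ij}-\beta(x)c_{ij}(x)\bigr)^2 \;=\; N - 2\beta(x)\sum_i c_{ii}(x) + \beta(x)^2 \sum_{i,j}c_{ij}^2(x)
\]
collapses, by the choice of $\beta(x)$, to $N - \bigl(\sum_i c_{ii}(x)\bigr)^2 / \sum_{i,j}c_{ij}^2(x)$, which the Cordes condition \eqref{cordes2} caps at $1-\varepsilon$ uniformly in $x$. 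Integrating over $\G$ and applying the Miranda--Talenti inequality \eqref{MT} to each $\|X_iX_j w\|_{L^2(\G)}$ produces
\[
\|\L w - \tilde A w\|_{L^2(\G)} \;\leq\; \sqrt{1-\varepsilon}\,C_0\,\|\L w\|_{L^2(\G)},
\]
which is exactly \eqref{near.2} with $\mu := \sqrt{1-\varepsilon}C_0 < 1$ (the last inequality being the standing hypothesis of the theorem).

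The reverse triangle inequality then gives $\|\tilde A w\|_{L^2(\G)} \geq (1-\mu)\|\L w\|_{L^2(\G)}$, which is \eqref{near.1} with $M = 1-\mu > 0$; the balancing requirement $\mu < 1 + \alpha M = 2-\mu$ is automatic since $\mu<1$. Proposition \ref{prop.near} thus yields that $\tilde A$ is near $\L$, Theorem \ref{thm.bij} transfers bijectivity, and the preliminary rescaling promotes this to bijectivity of $A \colon \h \to L^2(\G)$. The main obstacle in this plan is the very first step: the Cordes computation naturally calls for a point-varying scaling factor, while the nearness framework admits only a constant one, and reconciling this mismatch is exactly what forces the ellipticity assumption \eqref{ell} into the hypotheses, in order to secure the two-sided $L^\infty$ bound on $\beta$ that makes the rescaling harmless. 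Once this is navigated, the remainder is a routine chain of pointwise Cauchy--Schwarz, the Cordes identity, and Miranda--Talenti.
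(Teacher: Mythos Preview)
Your proof is correct, and the heart of it---the pointwise Cordes identity $\sum_{i,j}(\delta_{ij}-\beta c_{ij})^2 = N - (\sum_i c_{ii})^2/\sum_{i,j}c_{ij}^2 \leq 1-\varepsilon$ followed by Miranda--Talenti \eqref{MT}---is exactly the estimate the paper establishes. The only difference is organizational: the paper runs a direct Banach fixed-point argument, defining $T\colon \h\to\h$ by $\L(Tu)=\L u - c(x)Au + c(x)f$ (with $c(x)$ equal to your $\beta(x)$) and showing $T$ is a contraction with ratio $\sqrt{1-\varepsilon}\,C_0<1$, whereas you absorb $\beta$ into the operator and then invoke the abstract nearness machinery (Proposition~\ref{prop.near} and Theorem~\ref{thm.bij}). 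Since Campanato's Theorem~\ref{thm.bij} is itself a fixed-point argument, the two routes are equivalent; your packaging has the minor advantage of making Theorem~\ref{theorem2} a visible instance of the same nearness template used for Theorem~\ref{theorem1}.
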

We will refine this result with the specific values of constants in the case of the Heisenberg group, in Section \ref{SEC:4}. However, the statement of Theorem \ref{theorem2} already on $\G=\R^n$ appears to be new, in which case we also have $N=n$:
\begin{cor}
    Let $f \in L^2(\mathbb{R}^n)$ and let coefficients $(c_{ij}) \in L^{\infty}(\R^n)$, $i,j=1,\ldots,n$, satisfy condition \eqref{ell} and the Cordes condition \eqref{cordes2} for $N=n$, and for some $\varepsilon\in (0,1)$ such that $\sqrt{(1-\varepsilon)}n<1$. Then the equation 
     \[
     \sum_{i,j=1}^{n}c_{ij}(x) \partial^2_{x_i x_j} u=f
    \]
     has a unique solution $u \in \dot{H}^2(\R^n)$.   
\end{cor}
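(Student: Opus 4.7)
The plan is to derive the corollary as an immediate specialization of Theorem \ref{theorem2} to the abelian step-one stratified group $\G=\R^n$ (with $n\geq 3$ so that the homogeneous dimension requirement $Q\geq 3$ of the theorem is met; note $Q=n$ in this case). In this setting the Hörmander system is simply $X_j=\partial_{x_j}$ for $j=1,\ldots,n$, so $N=n$, each $X_iX_j u=\partial^2_{x_ix_j}u$, and the sub-Laplacian $\L$ coincides with the ordinary Laplacian $\Delta$. The space $\h$ appearing in Theorem \ref{theorem2} is by definition the completion of $\mathcal{S}(\G)$ in the corresponding homogeneous Sobolev norm; for $\G=\R^n$ this is exactly $\dot{H}^2(\R^n)$.

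The key numerical identification is the value of the constant $C_0$. By Plancherel, for $u\in\mathcal S(\R^n)$ one has $\|\partial_i\partial_j u\|_{L^2}\leq \|\Delta u\|_{L^2}$ (since $|\xi_i\xi_j|\leq|\xi|^2$), so the Miranda-Talenti estimate \eqref{MT} holds with $C_{ij}=1$, as recorded by the paper referring to \cite[Lemma 1.2.2]{MPS00}. Consequently,
\[
C_0=\Bigl(\sum_{i,j=1}^n C_{ij}^2\Bigr)^{1/2}=n,
\]
and the smallness hypothesis $\sqrt{1-\varepsilon}\,C_0<1$ of Theorem \ref{theorem2} becomes precisely the assumption $\sqrt{1-\varepsilon}\,n<1$ of the corollary.

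With these identifications in place, the remaining hypotheses transfer verbatim: $f\in L^2(\R^n)$, the coefficients $c_{ij}\in L^\infty(\R^n)$ satisfy the uniform ellipticity \eqref{ell} and the Cordes condition \eqref{cordes2} with $N=n$. Theorem \ref{theorem2} then yields a unique $u\in\dot{H}^2(\R^n)$ solving
\[
\sum_{i,j=1}^{n}c_{ij}(x)\,\partial^2_{x_ix_j}u=f,
\]
which is the claim of the corollary.

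I do not expect any obstacles: the proof is a direct translation, and the only items to verify are the correspondence $\h\leftrightarrow \dot{H}^2(\R^n)$ and the value $C_0=n$. The qualitative novelty compared to the classical Euclidean theory (e.g.\ \cite{P66}) is that, thanks to Theorem \ref{theorem2} being formulated on the entire group $\G$, no bounded domain and no boundary regularity are required here.
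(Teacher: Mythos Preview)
Your proof is correct and follows the same approach as the paper: the corollary is deduced directly from Theorem \ref{theorem2} by specializing to $\G=\R^n$, where $C_0=n$ (as noted after \eqref{C0}). Your additional remarks on the identification $\h=\dot H^2(\R^n)$ and the implicit requirement $n\geq 3$ (from $Q\geq 3$) are accurate and more detailed than the paper's one-line justification.
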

    This result follows by Theorem \ref{theorem2} since when $\G=\R^n$ we have $C_0=n$. We also note that in the case of $\G=\R^n$ condition \eqref{ell} is the ellipticity condition, while on general stratified groups $\G$ condition \eqref{ell} is a condition on the quadratic form corresponding to coefficients $c_{ij}(x),$ $1\leq i,j\leq N$.

\medskip

The proofs of Theorems \ref{theorem1} and \ref{theorem2} can be found in Section \ref{SEC:3}. Section \ref{SEC:4} is devoted to the application of our main results in the two most well-studied cases of stratified groups: the trivial case of  $\mathbb{R}^n$ and the case of the Heisenberg group $\HN$. In the latter cases, the constant $C_0$ becomes explicit,  and the condition $\sqrt{1-\varepsilon}C_0<1$ in Theorem \ref{theorem2} can be checked directly. 
In the case of $\mathbb{R}^n$ the constant  is known by the Miranda-Talenti estimate, see e.g. \cite[Lemma 1.2.2]{MPS00}, and in the case of $\HN$ it is computed in Proposition \ref{Heis}. The results contained in this paper are new even in the trivial abelian case of $\R^n$. 

\section{Preliminaries}\label{SEC:2}
In what follows we denote by $\G\equiv \R^n$ any stratified Lie group of homogeneous dimension $Q\geq n$, having the vector fields $X_1,\ldots,X_N$ generating its Lie algebra. The analysis of stratified groups, and more generally of homogeneous Lie groups, appeared in the 80's, and the seminal work of Folland and Stein \cite{FS82} has essentially initiated it.  

Recall, that  the sub-Laplacian of a stratified group $\G\equiv \R^n$ is the hypoelliptic operator
\[
\L=\sum_{i=1}^{N}X_{i}^{2}\,,
\]
that is the sum of squares of smooth real-valued vector fields satisfying H\"ormander's condition \cite{H67}.  The operator $\mathcal{L}$ is a homogeneous differential operator of degree $2$ with respect to the dilations $\delta_\lambda: \G \rightarrow \G$, $\lambda>0$, on the group $\G$. Recall that the dilations are automorphisms of $\G$, and so also of the corresponding Lie algebra $\mathfrak{g}$, and their action on $\G, \mathfrak{g}$, is determined by the stratificaton of $\mathfrak{g}$, see e.g. \cite[Section 3.1]{FR16}. The Jacobian determinant of the map $\delta_\lambda$ for $\lambda>0$ is computed as $\lambda^Q$, where $Q$ is the homogeneous dimension of $\G$. For a stratified group $\G$ we have $Q\geq \text{dim}(\G)$, where $\text{dim}(\G)$ is the topological dimension of $\G$, and the equality holds only in the trivial case where $\G=\R^n$ is a Euclidean space. 

Similar arguments as those used in the Euclidean setting show the validity of Young's inequality for the convolution of functions in our setting, and more generally in the setting of a nilpotent Lie group; see \cite[Proposition 1.5.1]{FR16}. Moreover, in \cite[Proposition 1.5.1]{FR16} it is given that, if $p,q \in (1,\infty)$ are such that $\frac{1}{p}+\frac{1}{q}>1$, $f_1 \in L^p(\G)$ and $f_2$ satisfies the weak-$L^q(\G)$ condition, i.e., it satisfies
\[
\sup_{s>0} s^q |\{ x\,:\, |f_2(x)|>s\}| := \|f_2\|^{q}_{w-L^q(\G)}<\infty\,,
\]
then we have 
\begin{equation}
    \label{Young}
    \|f_1 \ast f_2\|_{r} \leq \|f_1\|_p \|f_2\|_{w-L^q(\G)}<\infty\,,
\end{equation}
where $r$ satisfies the relation $\frac{1}{p}+\frac{1}{q}=\frac{1}{r}+1$.

Recall that for a left-invariant operator  $A$ on a Lie group $\G$ we have 
\[
\|A\|_{\text{op}}=\sup_{\pi \in \widehat{\G}}\|\pi(A)\|_{\mathcal{L}(\mathcal{H}_\pi)}\,,
\]
where $\widehat{\G}$ denotes the unitary dual of the group $\G$, $\mathcal{H}_\pi$ denotes the representation space\footnote{In our case, where $\G$ is a nilpotent Lie group, we have $\mathcal{H}_\pi=L^2(\R^k)$, for some $k \in \mathbb{N}$.} of $\pi$, and   $\|A\|_{\text{op}}$ is the operator norm of $A$ from $L^2(\G)$ to $L^2(\G)$. 

Let us recall the following result as in \cite[Theorem 3.2.45]{FR16}.
\begin{thm}[Liouville theorem on homogeneous Lie groups]\label{liouv}
    Let $L$ be a homogeneous left-invariant operator on a homogeneous Lie group $\G$. We assume that $L$ and $L^{t}$ are hypoelliptic on $\G$. If the distribution $f \in \mathcal{S}'(\G)$ satisfies $Lf=0$, then $f$ is a polynomial.
\end{thm}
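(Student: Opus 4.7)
The plan is to first upgrade $f$ to a smooth function via hypoellipticity, and then exploit the homogeneity of $L$ together with the tempered growth of $f$ to show that every sufficiently high horizontal derivative of $f$ vanishes identically; once this is established, the Folland--Stein Taylor calculus on $\G$ forces $f$ to be a polynomial. Since $L$ is hypoelliptic and $Lf=0\in C^{\infty}(\G)$, one obtains $f\in C^{\infty}(\G)$ at once. Moreover, because $L$ is left-invariant, every horizontal derivative $g_{\alpha}:=X^{\alpha}f$ still satisfies $L g_{\alpha}=X^{\alpha}(Lf)=0$, lies in $C^{\infty}(\G)\cap\mathcal{S}'(\G)$, and is the natural object to bound.

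Next, letting $\nu>0$ denote the homogeneity degree of $L$, I would rescale by $f_{\lambda}(x):=f(\delta_{\lambda}x)$, so that homogeneity gives $L f_{\lambda}=\lambda^{\nu}(Lf)_{\lambda}=0$. Combining the standard a priori $C^{k}_{\mathrm{loc}}$ estimates for hypoelliptic equations on balls with the tempered bound $|\langle f,\phi\rangle|\le C\,p_{M}(\phi)$, and tracking the Jacobian $\lambda^{Q}$ of $\delta_{\lambda}$ when passing seminorms across the rescaling, one arrives at a polynomial growth estimate of the form
\[
|X^{\alpha}f(x)|\le C_{\alpha}(1+|x|)^{M-[\alpha]},
\]
where $[\alpha]$ is the $\delta_{\lambda}$-weighted length of $\alpha$ and $M$ is the order of $f$ as a tempered distribution. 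In particular, for $[\alpha]>M$ the derivative $X^{\alpha}f$ is bounded on $\G$ and decays at infinity.

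The remaining step is to conclude that a smooth, decaying solution $u$ of $Lu=0$ on the whole group must vanish. Here I would use the hypothesis that $L^{t}$ is also hypoelliptic, combined with one of two routes: either invoking the existence of a homogeneous fundamental solution $E$ for $L$ provided by Folland--Stein theory for homogeneous hypoelliptic left-invariant operators and writing $u=u*(LE)=(Lu)*E=0$ after a cutoff/mollification argument against the hypoellipticity of $L^{t}$; or passing to the group Fourier transform, observing that for Plancherel-a.e.\ $\pi\in\widehat{\G}$ the operator $\pi(L)$ is injective on $\mathcal{H}_{\pi}$ (by the joint hypoellipticity of $L$ and $L^{t}$), and deducing that $\widehat{u}(\pi)=0$ Plancherel-a.e., whence $u\equiv 0$. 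Once $X^{\alpha}f\equiv 0$ for all horizontal $\alpha$ with $[\alpha]>M$, the Folland--Stein Taylor expansion of $f$ based at the origin terminates, and $f$ is a polynomial on $\G$.

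The main obstacle will be the last step, the passage from decay (or boundedness) to vanishing: in the sub-Riemannian setting no maximum principle is available, so the argument has to be routed either through an explicit homogeneous fundamental solution, whose existence for a general homogeneous hypoelliptic left-invariant $L$ is itself nontrivial, or through the group Plancherel theorem on the nilpotent Lie group $\G$, where one must handle $\pi(L)$ as a possibly unbounded operator and confirm that ``Plancherel-a.e.\ injectivity'' genuinely follows from the joint hypoellipticity assumption on $L$ and $L^{t}$. The seminorm bookkeeping in the rescaling step is routine but must be done carefully because the factor $\lambda^{Q}$ from the Jacobian of $\delta_{\lambda}$ enters, and a misalignment there would spoil the polynomial exponent $M-[\alpha]$ on which the whole argument turns.
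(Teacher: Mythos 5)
The paper does not prove this statement: Theorem \ref{liouv} is quoted verbatim from \cite[Theorem 3.2.45]{FR16}, so there is no internal proof to compare yours against, and I assess your argument on its own terms. The first genuine gap is the claim that ``because $L$ is left-invariant, every horizontal derivative $g_{\alpha}:=X^{\alpha}f$ still satisfies $Lg_{\alpha}=X^{\alpha}(Lf)=0$.'' Left-invariance of $L$ means that $L$ commutes with left translations, hence with \emph{right}-invariant vector fields; it does not commute with the left-invariant horizontal fields $X_j$. Already on $\Hone$ with $L=X^2+Y^2$ one has $[L,X]=[Y^2,X]=-2YT\neq 0$, so $Xf$ need not solve $Lu=0$ when $f$ does. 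Since your entire chain (polynomial decay of $X^{\alpha}f$ for $[\alpha]>M$, then a Liouville statement for decaying solutions, then termination of the Taylor expansion) is built on $X^{\alpha}f$ being a solution, it collapses at this first link. The repair is standard --- work with right-invariant derivatives $\widetilde{X}^{\alpha}f$, which do commute with $L$, and observe that the Folland--Stein Taylor calculus still forces $f$ to be a polynomial once all right-invariant derivatives of sufficiently high homogeneous degree vanish --- but as written the step fails.

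The second issue is that the step you yourself identify as the crux --- a smooth solution of $Lu=0$ on all of $\G$ that decays at infinity must vanish --- is only sketched, and neither route you propose is routine. The fundamental-solution route needs the homogeneity degree $\nu$ to satisfy $\nu<Q$ to avoid logarithmic corrections, and the identity $u=(Lu)\ast E$ for a non-compactly supported $u$ requires a cutoff argument whose commutator errors must be shown to vanish using the decay of both $u$ and $E$; the Plancherel route requires injectivity of $\pi(L)$ on smooth vectors for almost every $\pi$, which is a Rockland-type theorem rather than an observation, plus a justification that $\widehat{u}(\pi)$ makes sense for your $u$. So the proposal is a plausible strategy with two unfilled holes, one of which (the commutation) is an outright error in the stated form. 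For comparison, the standard proof of this result (Geller's, which is the one reproduced in \cite{FR16}) takes a different and shorter route: it shows that the Euclidean Fourier transform of $f$ is supported at the origin, essentially by solving $\phi=L^{t}\psi$ within the Schwartz class for every test function $\phi$ whose Fourier transform vanishes near $0$ (this is where hypoellipticity and homogeneity of $L$ and $L^{t}$ and the associated fundamental solutions enter), whence $\langle f,\phi\rangle=\langle Lf,\psi\rangle=0$ and $f$ is a polynomial. That argument avoids both the pointwise derivative estimates and the decay-to-vanishing step entirely.
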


The spaces of functions/distributions that we consider here are the Banach spaces $L^p(\G)$, $p \in [1,\infty)$ and the subspace of tempered distributions $\h$ defined in a more general form in \cite[Definition 4.4.12]{FR16}. Particularly, by $\h$ we have denoted the space of tempered distributions obtained by the completion of $\mathcal{S}(\G)$ for the Sobolev norm 
    \begin{equation}\label{EQ:Sob}
    \|u\|_{\h}:=\|\L u\|_{L^2(\G)}\,, \quad u \in \mathcal{S}(\G).
    \end{equation}
%By $\hh$ we denote the space $\hh=\h \cap L^2(\G)$.
By \cite[Proposition 4.4.13 (1)]{FR16} the space $\h$ is a Banach space that contains $\mathcal{S}(\G)$ as a dense subspace. 
%Moreover, since $\L^{-1}$ is $-2$-homogeneous left-invariant differential operator, by \cite[Theorem 4.4.16 (2)]{FR16}, we get that $\L^{-1}$ maps continuously $L^2(\G)$ to $\h$, that is we have
%\begin{equation}
%    \label{L-1}
%    \|\L^{-1}\phi\|_{\h}\leq C \|\phi\|_{L^2(\G)}\,, 
%\end{equation}
%for some $C$ independent of $\phi$, where $\phi \in L^2(\G)$.

%    We define by $\H$ to be the restriction of  the homogeneous Sobolev space $\h$ to the functions vanishing at infinity, i.e., we define:
%    \[
%    \{ u \in \H\,:\, u \in \h\,,\quad \text{and}\quad u(x)\xrightarrow[|x| \to \infty]{} 0 \}
%    \]
   
%We equip the space $\H$ with the above norm of the space $\h$\,.

\section{Main results in the general case}\label{SEC:3}
We start with the solution of the linear problem.
\begin{prop}\label{prop.uniq}
    Let $\G$ be a stratified Lie group of homogeneous dimension $Q \geq 3$, and let  $f \in L^2(\G)$. There exists a unique solution $u \in \h $ to the problem 
    \begin{equation}\label{uniq}
            \L u= f\,\quad \rm on \quad \G.
    \end{equation}
    The solution operator $f\mapsto u$ is continuous from $L^2(\G)$ to $\h $.
\end{prop}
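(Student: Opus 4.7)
The plan is to recast the statement as saying that the sub-Laplacian $\L$, regarded as an operator from $\h$ to $L^{2}(\G)$, is a bijective isometry. By the very definition of the homogeneous Sobolev norm \eqref{EQ:Sob}, the map
\[
\L:\mathcal{S}(\G)\longrightarrow L^{2}(\G)
\]
is an isometry when $\mathcal{S}(\G)$ is equipped with $\|\cdot\|_{\h}$; it is injective because any Schwartz function annihilated by $\L$ is a polynomial by Theorem \ref{liouv} and hence must vanish. The standard extension of isometries between Banach spaces then yields a unique isometric embedding $\overline{\L}:\h\to L^{2}(\G)$. This already takes care of uniqueness in \eqref{uniq} and gives the identity $\|u\|_{\h}=\|f\|_{L^{2}(\G)}$, which is precisely the asserted continuity of the solution operator.

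For existence it suffices to show that $\L(\mathcal{S}(\G))$ is dense in $L^{2}(\G)$, as this forces $\overline{\L}$ to be surjective. Take $g\in L^{2}(\G)$ orthogonal to the image, i.e.\ $\int_{\G}g\,\L\varphi\,dx=0$ for every $\varphi\in\mathcal{S}(\G)$. Since the vector fields $X_{i}$ are real and left-invariant on the unimodular group $\G$, they are formally skew-adjoint with respect to the Haar measure, and therefore $\L=\sum_{i=1}^{N}X_{i}^{2}$ is formally self-adjoint. Consequently the orthogonality condition reads $\L g=0$ in $\mathcal{S}'(\G)$.

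At this point I would invoke the Liouville theorem (Theorem \ref{liouv}): $\L$ is left-invariant, homogeneous of degree $2$, and hypoelliptic by H\"ormander's theorem applied to $\{X_{i}\}_{i=1}^{N}$, and its transpose $\L^{t}=\L$ is likewise hypoelliptic. Hence $g$ must be a polynomial, and being in $L^{2}(\G)$ forces $g\equiv 0$. Density of $\L(\mathcal{S}(\G))$ in $L^{2}(\G)$ follows, so $\overline{\L}:\h\to L^{2}(\G)$ is a surjective isometry, and its continuous inverse is the required solution operator $f\mapsto u$.

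The only delicate point I foresee is compatibility: the extended operator $\overline{\L}$ and the distributional Laplacian must agree on $\h$. This is built into the construction, since the continuous inclusions $\mathcal{S}(\G)\hookrightarrow\h\hookrightarrow\mathcal{S}'(\G)$ provided by \cite[Proposition 4.4.13]{FR16}, together with the continuity of $\L$ on $\mathcal{S}'(\G)$, force the two definitions to coincide. The hypothesis $Q\geq 3$ does not enter this abstract argument but is the standard regime ensuring the existence of a homogeneous fundamental solution of $\L$, should one prefer a more explicit construction via $u=-\Gamma\ast f$.
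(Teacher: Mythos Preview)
Your argument is correct and takes a genuinely different route from the paper. The paper establishes existence by invoking the explicit fundamental solution $k=d^{2-Q}$ of $\L$ (this is where $Q\geq 3$ enters) so that $u=f\ast k$ solves \eqref{uniq}, and then imports the continuity of $\L^{-1}$ between homogeneous Sobolev spaces from \cite[Proposition 4.4.13]{FR16} together with a duality argument. Uniqueness is handled exactly as you do, via Theorem \ref{liouv}. By contrast, you recognise that the statement is tautologically equivalent to $\L:\h\to L^{2}(\G)$ being a surjective isometry, and you obtain surjectivity by a Hahn--Banach/orthogonality argument: any $g\in L^{2}(\G)$ orthogonal to $\L(\mathcal{S}(\G))$ is $\L$-harmonic, hence a polynomial by Liouville, hence zero. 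Your approach is more self-contained (no fundamental solution, no black-box continuity from \cite{FR16} beyond the embedding $\h\hookrightarrow\mathcal{S}'(\G)$) and makes clear that the hypothesis $Q\geq 3$ is not actually needed at this abstract level; the paper's approach, on the other hand, yields the explicit representation $u=f\ast k$, which is occasionally useful downstream.
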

\begin{proof}
We know that $k=d^{2-Q}$, where $d$ is a symmetric homogeneous quasi-norm on $\G$ called the $\L$-gauge, is the fundamental solution of the sub-Laplacian $\L$, see, e.g. \cite{F73}, $k\in S'(\G)$, see \cite[Theorem 3.2.40]{FR16}.

Then $u=f \ast k$ solves \eqref{uniq}, the solution operator $\L^{-1}$ is also continuous as an operator $\L^{-1}:{\dot{\mathcal{H}}^{-2}_{\mathcal{L}}(\G)}\to L^2(\G)$, by \cite[Proposition 4.4.13 (4)]{FR16}. Since the dual of ${\dot{\mathcal{H}}^{-2}_{\mathcal{L}}(\G)}$ is $\h$, see \cite[Theorem 4.4.28 (10)]{FR16}, by duality we have that $\L^{-1}$ is continuous from $L^2(\G)$ to $\h $.

%Then $u=f \ast k$ solves \eqref{uniq}, and, by the definition of the Sobolev space $\h$, we have $u \in \h$. 

Now, to prove the uniqueness of the solution to the equation \eqref{uniq}, let $u_1,u_2 \in \mathcal{S}'(\G)$ be two solutions of the problem \eqref{uniq} satisfying $u_1,u_2\in \h$. Then the difference $u:=u_1-u_2$ is a solution to the problem $\L u=0$ with 
$u\in \h$. 

Since $\L=\L^{t}$ satisfies the hypothesis of Theorem \ref{liouv}, we get that $u$ is a polynomial on $\G$. It is immediate that there is no polynomial $u$ other than the one that is identical to zero satisfying $u\in \h$, by our definition of $\h$.  Hence, we must have $u_1=u_2$, and the proof is complete. 
\end{proof}

\begin{rem} We note that for $Q\geq 5$, we have the Sobolev embedding $\h\subset L^{\frac{2Q}{Q-4}}(\G)$, see \cite[Proposition 4.4.23 (5)]{FR16}. This can be also shown directly. We observe that
\begin{eqnarray*}
    |\{ x \in \G: d(x)^{2-Q}>s\}| & = &   |\{\{x \in \G : d(x)<s^{\frac{1}{2-Q}}\}| \\
   & = & \int_{\{x \in \G : s^{\frac{1}{Q-2}}d(x)<1\}}\,dx\\
   & = & \int_{\{y \in \G : d(y)<1\}}s^{\frac{Q}{2-Q}}\,dy\\
   & = & s^{\frac{Q}{2-Q}}\cdot {\rm vol}(\{y: d(y)<1\})=Cs^{\frac{Q}{2-Q}}< \infty\,,
\end{eqnarray*}
where we have used the fact that $d$ is homogeneous of degree $1$. Hence we have proved that $k \in w-L^{\frac{Q}{Q-2}}(\G)$, since
\[
s^{\frac{Q}{Q-2}}|\{ x \in \G: d(x)^{2-Q}>s\}| <\infty\,,
\]
and by  Young's convolution inequality  \eqref{Young} we get $u=f \ast k \in L^{\frac{2Q}{Q-4}}(\G)$.
\end{rem}

\begin{proof}[Proof of Theorem \ref{theorem1}]
    We will show that the operator $A:\h \rightarrow L^{2}(\G)$ defined as 
    \[
    Au(x):= a(x, \{X_iX_j\}_{i,j=1}^{N}u (x))\,,\quad x \in \G\,,
    \]is near the sub-Laplacian $\L$. Indeed, let $u,v \in \h$ and we set 
    \begin{equation}\label{t,xi}
    \tau:=\{X_iX_j(u-v)\}_{i,j=1}^{N}\,,\quad \xi:=\{X_iX_jv\}_{i,j=1}^{N}\,.
    \end{equation}
    Since 
    \begin{equation}\label{trace,t}
    \Tr \tau=\sum_{i=j}X_iX_j (u-v)=\L(u-v)\,,
    \end{equation}
    condition (C1) gives 
    \[
    | a(x, \{X_iX_j\}_{i,j=1}^{N}u(x))-a(x, \{X_iX_j\}_{i,j=1}^{N}v(x))|^2\geq M^2 |\L(u-v)(x)|^2\,,
    \]
    for almost all $x \in \G$. Integrating the latter over $\G$ with respect to the Haar measure on $\G$ we get
    \[
        \|Au-Av\|_{L^{2}(\G)} \geq M \|\L u-\L v\|_{L^{2}(\G)}\,,
    \]
    and this shows the condition \eqref{near.1}, as in Proposition \ref{prop.near}, of the nearness of the operator $A$ to the operator $\L$. On the other hand, using condition (C2) we get the following estimate that holds true for almost every $x \in \G$:
    \begin{eqnarray*}
    |\Tr \tau - \alpha (a(x,\xi+\tau)-a(x,\xi))|^2& \leq & \left(\gamma \|\tau\|_{\mathbb{R}^{N^2}}+\delta |\Tr \tau|\right)^2\\
    & \leq & (\gamma+ \delta)\left(\gamma \|X_i X_j (u-v)(x)\|^2_{\mathbb{R}^{N^2}}+\delta |\L(u-v)(x)|^2 \right)
    \end{eqnarray*} where for the second estimate we have used the inequality 
    \[
    (\gamma \alpha +\delta \beta)^2 \leq (\gamma +\delta)(\gamma \alpha^2+\delta \beta^2)\,,\quad \alpha, \beta, \gamma, \delta\geq 0\,,
    \]
    which can be easily verified, and the expressions given in \eqref{t,xi} and \eqref{trace,t}.
    %while also that $a(x,0)=0$ for almost all $x \in \G$.
    Integrating the latter over $\G$, we get 
    \begin{equation}\label{gamma,delta}
    \begin{split}
       & \int_{\G} \left|\L (u-v)(x)-\alpha(a(x, \{X_iX_j\}_{i,j=1}^{N}u(x))-a(x, \{X_iX_j\}_{i,j=1}^{N}v(x)))\right|^2dx\\&\leq (\gamma+ \delta)\left[ \gamma \int_{\G}\sum_{i,j=1}^{N}|X_i X_j (u-v)(x)|^2dx+\delta \int_{\G} |\L (u-v)(x)|^2\,dx \right] \\&
        =(\gamma+\delta) \left[\gamma I+\delta II\right]\,,
    \end{split}
\end{equation}
where we have defined 
\[
I:=\int_{\G}\sum_{i,j=1}^{N}|X_i X_j (u-v)(x)|^2dx\,,
\]
and 
\begin{equation}\label{II.L2}
II:=\int_{\G} |\L (u-v)(x)|^2\,dx=\|\L (u-v)\|^2_{L^2(\G)}\,.
\end{equation}
To estimate $I$ we make use of \eqref{MT}, and we get
\begin{eqnarray}\label{I.L2}
    I & = & \sum_{i,j=1}^{N}\|X_iX_j (u-v)\|^2_{L^2(\G)}\nonumber\\
    & \leq & C_{0}^{2} \|\L(u-v)\|^2_{L^2(\G)}\,,
\end{eqnarray}
where we have set $C_{0}^2=\sum_{1 \leq i,j \leq N}C_{ij}^2$. A combination of \eqref{gamma,delta}, \eqref{II.L2} and \eqref{I.L2} gives 
\begin{equation}
    \label{gamma,delta,I,II}
    \|\L u-\L v -\alpha (Au-Av)\|_{L^2(\G)}\leq \sqrt{(\gamma+\delta)(\gamma C_{0}^{2}+\delta)}\|\L(u-v)\|_{L^2(\G)}\,.
\end{equation}
Setting $\mu=\sqrt{(\gamma+\delta)(\gamma C_{0}^{2}+\delta)}<1+\alpha M$, the latter can be rewritten as 
\[
 \|\L u-\L v -\alpha (Au-Av)\|_{L^2(\G)}\leq \mu\|\L(u-v)\|_{L^2(\G)}\,,
\]
and we have shown condition \eqref{near.2}. Summarising, both condition as in Proposition \ref{prop.near} are satisfied, implying that the operators $A$ and $\L$ are near. Since the operator $\L:\h \rightarrow L^2(\G)$ is bijective by Proposition \ref{prop.uniq}, the result follows from Theorem \ref{thm.bij} which gives that $A$ is also bijective, and the proof is complete.
\end{proof}

\begin{proof}[Proof of Theorem \ref{theorem2}]
Let $c(x):=\frac{\sum_{i=1}^{N}c_{ii}(x)}{\sum_{i,j=1}^{N}c_{ij}(x)^2}$. By the assumption \eqref{ell}, it is clear that $c(x)>0$ and $c\in L^\infty(\G)$. Given $f \in L^2(\G)$, we define the operator $T:\h\rightarrow \h$, by $Tu=v$, for $u \in \h$, with $v$ being the solution of the problem 
\begin{equation*}
        \L v=\L u-c Au+cf.
\end{equation*}
This operator is well-defined since such $v$ is unique, as a consequence of Proposition \ref{uniq}, since $\L u-c Au+cf \in L^2(\G)$, for $f$ as in the hypothesis and using the estimate \eqref{MT}. 
%\textcolor{red}{It also follows that $T$ is continuous from $\h$ to $\h\cap L^{2}(\G).$ do we need this? I think we need that $u \in \h \cap L^{\frac{2Q}{Q-4}}$}
We will now show that the operator $T$ is a contraction as an operator from $\h$ to $\h$. Let $u_1,u_2 \in \h$. We have 
\begin{eqnarray}\label{est.T}
\|Tu_1-Tu_2\|^{2}_{\h} & = &\|v_1-v_2\|^{2}_{\h}=\|\L v_1-\L v_2\|_{L^2(\G)}^{2}\nonumber\\
& = & \| \L u_1-c Au_1+cf- (\L u_2-c Au_2+cf) \|_{L^2(\G)}^{2}\nonumber\\
& = & \|\L (u_1-u_2)-c(Au_1-Au_2) \|_{L^2(\G)}^{2}\nonumber\\
& = & \int_{\G}\left|(\L-cA)(u_1-u_2)(x) \right|^2\,dx\nonumber\\
& = & \int_{\G}\left|\sum_{i=1}^{N} X_{i}^{2}(u_1-u_2)-c(x) \sum_{i,j=1}^{N} c_{ij}(x)X_iX_j(u_1-u_2)(x)\right|^2\,dx\nonumber\\
& = & \int_{\G}\left|\sum_{i,j=1}^{N}(\delta_{ij}-c(x) c_{ij}(x))X_iX_j(u_1-u_2)(x)\right|^2\,dx\nonumber \\
& \leq & \int_{\G} \sum_{i,j=1}^{N} |\delta_{ij}-c(x)c_{ij}(x)|^2 \sum_{i,j=1}^{N} |X_i X_j (u_1-u_2)(x)|^2\,dx\nonumber\\
& = &  \|I(x)\|_{L^{\infty}(\G)} \int_{\G} II(x)\,dx\,,    
\end{eqnarray}
where we have applied the Cauchy–Schwarz inequality, and we have set 
\[
I(x):= \sum_{i,j=1}^{N} |\delta_{ij}-c(x)c_{ij}(x)|^2\,,
\]
and 
\[
II(x):=\sum_{i,j=1}^{N} |X_i X_j (u_1-u_2)(x)|^2\,.
\]
For the estimate of the norm  $\|I\|_{L^{\infty}(\G)}$, we make use of the Cordes condition \eqref{cordes} on the coefficients $(c_{ij}(x))$ and we get 
\begin{equation}
    \label{est.I}
     \|I\|_{L^{\infty}(\G)} \leq N- \frac{\left(\sum_{i=1}^{N}c_{ii}(x) \right)^2}{\sum_{i,j=1}^{N}c_{ij}^{2}(x)} \leq 1 -\varepsilon\,.
\end{equation}
Indeed, this follows from
\begin{multline*}
  \sum_{i,j=1}^{N} |\delta_{ij}-c(x)c_{ij}(x)|^2=N-2c(x)\sum_{i,j=1}^N c_{ij}(x)\delta_{ij}+c(x)^2
  \sum_{i,j=1}^N c_{ij}(x)^2 \\ =
  N-2\frac{(\sum_{i=1}^N c_{ii}(x))^2}{\sum_{i,j=1}^N c_{ij}(x)^2}+
  \frac{(\sum_{i=1}^N c_{ii}(x))^2}{\sum_{i,j=1}^N c_{ij}(x)^2}=
  N-\frac{(\sum_{i=1}^N c_{ii}(x))^2}{\sum_{i,j=1}^N c_{ij}(x)^2},
\end{multline*}
using the definition of $c(x).$

To estimate the term $ \int_{\G} II(x)\,dx$ we use the estimate \eqref{MT} and obtain
\begin{equation}
    \label{est.II}
    \int_{\G} II(x)\,dx \leq  C_{0}^{2} \|\L (u_1-u_2)\|_{L^2(\G)}^{2}=C_{0}^{2} \|u_1-u_2\|^2_{\h}\,,
\end{equation}
where $C_{0}^{2}=\sum_{1\leq i,j \leq N}C_{ij}^{2}$.
Summarising, a combination of \eqref{est.T}, \eqref{est.I} and \eqref{est.II} gives 
\[
\|Tu_1-Tu_2\|^{2}_{\h} \leq (1 -\varepsilon) C_{0}^{2} \|u_1-u_2\|^2_{\h}\,,
\]
and the latter under the condition that 
\[
\sqrt{1-\varepsilon}C_0<1\,,
\]
means that the operator $T$ is a contraction on $\h$. Hence by Banach fixed point theorem there exists unique $u \in \h$, such that $Tu=u$,  which by the definition of the operator $T$ means that for this unique $u \in \h$ we have 
\[
\L u= \L u-c Au +cf\,,
\]
 and so $u$ is the unique solution to \eqref{1},
% We also note that since $u\in \h$, and the range of $T$ is contained in $\h\cap L^{2}(\G)$, we have that the solution $u$ is in $\h\cap L^{2}(\G)$,
 completing the proof. 
\end{proof}

\section{The Heisenberg group as a special case in our analysis}\label{SEC:4}
Recall that the canonical basis of the Heisenberg group $\HN\simeq \R^{2n+1}$ is given by
\[
X_i=\partial_{x_i}-\frac{y_i}{2}\partial_{t} \quad \text{and}\quad X_i=\partial_{y_i}+\frac{x_i}{2}\partial_{t}\,,\quad i=1,\ldots,n\,,
\]
\[
T=\partial_t\,.
\]
Since the only non-zero canonical commutation relation that is satisfied by the above vector fields is
\[
[X_i,Y_i]=T\,,\quad i=1,\ldots,n\,,
\]
and so the canonical positive sub-Laplacian on $\HN$ is given by 
\[
\L:=-\sum_{i=1}^n (X_{i}^{2}+Y_{i}^{2})\,.
\]
For $\lambda \in \R \setminus \{0\}$, we recall the Schr\"odinger representations $\pi_{\lambda}$ on the Heisenberg group $\HN$  
\[
\pl : \HN \rightarrow \mathcal{U}(L^2(\R^n))\,.
\]
The Schr\"odinger representations induce the infinitesimal representations on the left-invariant differential operators on $\HN$. The infinitesimal representation of such an operator $X$ is denoted by $\pl(X)$ and can be extended to operators from $L^2(\HN)$ to $L^2(\HN)$.

In what follows we denote by $\pl(X_i), \pl(Y_i)$ and $\pl(\L)$ (or by $\pi_{X_i}(\lambda)$, $\pi_{Y_i}(\lambda)$ and $\pi_{\L}(\lambda)$, respectively) both the operators and the infinite matrices associated to the corresponding vector fields with respect to the orthonormal basis consisting of the Hermite functions. For $k,l \in \mathbb{N}$ the $(k,l)$-entrices of these matrices are given in \cite{FRT21}, for $X$ and $Y$ being one of $X_i$ and $Y_j$, by 
\begin{equation*}\label{plap}
(\pl(\L))_{k,l}=|\lambda|(2k+1)\delta_{k,l},
\end{equation*}
\begin{equation}\label{px}
    (\pl(X))_{k,l}=\begin{cases}
        \sqrt{|\lambda|}\sqrt{\frac{k+1}{2}}\,, & k=l-1,\\
        -\sqrt{|\lambda|}\sqrt{\frac{k}{2}}\,, & k=l+1,\\
        0\,, & \text{otherwise},
    \end{cases}
\end{equation}
\begin{equation}\label{py}
    (\pl(Y))_{k,l}=\begin{cases}
        i\sqrt{|\lambda|}\sqrt{\frac{k+1}{2}}\,, & k=l-1,\\
        -i\sqrt{|\lambda|}\sqrt{\frac{k}{2}}\,, & k=l+1,\\
        0\,, & \text{otherwise}.
    \end{cases}
\end{equation}

The next proposition can be viewed as the Miranda-Talenti estimate for the case of the Heisenberg group $\mathbb{H}^n$. 
\begin{prop}\label{Heis}
    Let $Z_i$, $i=1,\ldots,2n$, be the generators of the Lie algebra of the Heisenberg group $\mathbb{H}^{n}$. Then we have 
    \[
    \|Z_i Z_j u\|_{L^2(\mathbb{H}^n)}\leq 2^{-1} \|\L_{\mathbb{H}^n}u\|_{L^2(\mathbb{H}^n)}\,.
    \]
\end{prop}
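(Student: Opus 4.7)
The plan is to reduce to a pointwise (in $\lambda$) operator-norm estimate on each representation space via the Plancherel theorem of $\HN$, and then verify that estimate directly from the explicit matrix formulas \eqref{px}--\eqref{py}.

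First I would invoke the Plancherel identity on $\HN$: for any left-invariant differential operator $D$,
\[
\|Du\|^{2}_{L^{2}(\HN)}=c_{n}\int_{\R\setminus\{0\}}\|\pi_{\lambda}(D)\widehat{u}(\lambda)\|^{2}_{HS}|\lambda|^{n}\,d\lambda.
\]
Applying this to $D=Z_{i}Z_{j}$ and $D=\L$, the claim reduces to showing that, for every $\lambda\neq 0$ and every Hilbert--Schmidt operator $\sigma$ on the representation space $L^{2}(\R^{n})$,
\[
\|\pi_{\lambda}(Z_{i}Z_{j})\sigma\|_{HS}\le\tfrac{1}{2}\|\pi_{\lambda}(\L)\sigma\|_{HS}.
\]
Since $\pi_{\lambda}(\L)$ is a positive diagonal operator in the multi-dimensional Hermite basis, with eigenvalues of the form $|\lambda|(2|\alpha|+n)$, decomposing $\sigma$ column by column turns this Hilbert--Schmidt inequality into the operator-norm bound
\[
\|\pi_{\lambda}(Z_{i}Z_{j})\pi_{\lambda}(\L)^{-1}\|_{\mathrm{op}}\le\tfrac{1}{2}
\]
on $L^{2}(\R^{n})$.

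Next I would compute $\pi_{\lambda}(Z_{i}Z_{j})=\pi_{\lambda}(Z_{i})\pi_{\lambda}(Z_{j})$ by composing the tridiagonal matrices \eqref{px} and \eqref{py}. Because each $\pi_{\lambda}(X_{k})$ or $\pi_{\lambda}(Y_{k})$ acts non-trivially only on the $k$-th tensor factor of $L^{2}(\R^{n})=\bigotimes_{k=1}^{n}L^{2}(\R)$, the product is a banded matrix whose entries are supported on multi-indices differing from the column by $\pm 1$ in at most two coordinates. A short case analysis over the four types $X_{p}X_{q}$, $X_{p}Y_{q}$, $Y_{p}X_{q}$, $Y_{p}Y_{q}$ (with $p=q$ or $p\neq q$) yields the explicit non-zero entries, proportional to $|\lambda|$ with simple combinatorial factors of the form $(2k_{p}+1)$ or $\sqrt{(k_{p}+1)(k_{q}+1)}$, which one then compares entry-wise with the eigenvalues $|\lambda|(2|\alpha|+n)$ of $\pi_{\lambda}(\L)$.

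The hardest step is this final matrix-norm comparison. The diagonal entries of $\pi_{\lambda}(Z_{i}Z_{j})\pi_{\lambda}(\L)^{-1}$ already attain magnitude $1/2$, so no slack remains for the off-diagonal band contributions and a naive Schur/row-sum estimate will not suffice. I expect to close the bound by either (i) a sharpened Schur test that pairs the symmetric $\pm$-bands, exploiting the algebraic identity $\sum_{k}(\pi_{\lambda}(X_{k})^{2}+\pi_{\lambda}(Y_{k})^{2})=-\pi_{\lambda}(\L)$ together with the commutation relation $[\pi_{\lambda}(X_{k}),\pi_{\lambda}(Y_{k})]=\pi_{\lambda}(T)$ to cancel cross-terms, or (ii) factoring each $\pi_{\lambda}(X_{k})$, $\pi_{\lambda}(Y_{k})$ through harmonic-oscillator creation/annihilation operators and applying the one-factor estimates $\|a_{k}\phi\|^{2}+\|a_{k}^{*}\phi\|^{2}\lesssim\langle\phi,\mathcal{N}_{k}\phi\rangle$, which $\pi_{\lambda}(\L)$ dominates tensorially. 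All estimates are uniform in $\lambda$ by the natural scaling, so the Plancherel reduction of the first step yields the asserted constant $1/2$.
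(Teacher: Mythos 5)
Your reduction is sound and is essentially the paper's: by Plancherel (equivalently, $\|D\L^{-1}\|_{\mathrm{op}}=\sup_{\lambda\neq 0}\|\pi_\lambda(D\L^{-1})\|$ for left-invariant $D$), the claim becomes the operator-norm bound $\|\pi_\lambda(Z_iZ_j)\pi_\lambda(\L)^{-1}\|_{\mathrm{op}}\le \tfrac12$ on each representation space, and the banded matrix entries you describe are exactly those the paper computes. The problem is that you never actually prove this bound: you correctly flag the matrix-norm comparison as the hardest step, observe that the diagonal already saturates $1/2$ so that no row-sum/Schur argument can work, and then offer two candidate strategies ((i) a paired-band Schur test using the commutation relations, (ii) creation/annihilation factorization) without carrying either out. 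That is a genuine gap, not a routine omission. Worse, your own observation points at a real obstruction: for the mixed product $XY$ the matrix $\pi_\lambda(XY\L^{-1})$ has, in the column through the ground-state Hermite function $h_0$ (take $n=1$ for concreteness), the three entries $-i/2$ at $(0,0)$ and $\tfrac{i}{2}\tfrac{\sqrt{(l+2)(l+1)}}{2l+1}\big|_{l=0}=i\tfrac{\sqrt2}{2}$ at $(2,0)$ (the $(−2)$-band being empty), so that
\[
\bigl\|\pi_\lambda(XY\L^{-1})h_0\bigr\|_{\ell^2}=\tfrac12\sqrt{1+2}=\tfrac{\sqrt3}{2}>\tfrac12 .
\]
Hence the operator norm of $\pi_\lambda(XY\L^{-1})$ is at least $\sqrt3/2$, and neither of your proposed closing arguments can recover the constant $1/2$; any correct argument along these lines will produce a larger constant for the mixed second derivatives.

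For comparison, the paper's own proof computes the same three bands and then passes from ``every matrix entry has modulus at most $1/2$'' to ``$\|XY\L^{-1}\|_{\mathrm{op}}=1/2$''. For a non-diagonal (here three-banded) matrix this inference is not valid — the maximum entry modulus is not the operator norm — and the column computation above shows it fails here. So your instinct about where the difficulty lies is exactly right; the step you could not complete is also the step the paper does not actually justify. If you want a correct statement of Miranda--Talenti type on $\HN$, you should either accept a larger constant (obtainable, e.g., from the column/row $\ell^2$-bounds of the banded matrix, uniformly in $\lambda$ and in the Hermite index), or restrict to the combinations for which the sharp constant genuinely is $1/2$ (e.g.\ the diagonal part, or suitable symmetrized sums where the $\pm2$ bands cancel via $[\pi_\lambda(X),\pi_\lambda(Y)]=\pi_\lambda(T)$), and then propagate the resulting value of $C_0$ through Theorem \ref{theorem2} and Corollary \ref{cor.H}.
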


\begin{proof}
One can identify the following vector fields
\begin{equation*}
    Z_i=\begin{cases}
        X_i &\quad {\rm if} \quad i=1,\ldots,n\\
        Y_i&\quad {\rm if} \quad i=1=n,\ldots,2n\,.
    \end{cases}
\end{equation*}

Note that the inverse operator $\L^{-1}$ is well-defined as an operator on $L^2(\G)$, see \cite[Section 4.1.3]{FR16}. For simplicity, we will write $X$ for one of $X_i$ and $Y$ for one of $Y_j$. We have 
    \begin{eqnarray*}
        \|X Y \L^{-1}\|_{\text{op}} & = &\sup_{\lambda \in \R \setminus \{0\}} \|\pi_{XY \L^{-1}}(\lambda)\|_{\text{op}_{\lambda}}\\
        & = & \sup_{\lambda \in \R \setminus \{0\}} \max_{k,l}\left|(\pi_{XY \L^{-1}}(\lambda))_{l,k}\right|\,.
    \end{eqnarray*}
    Decomposing we get 
    \[
    \left|(\pi_{XY \L^{-1}}(\lambda))_{l,k}\right|= \sum_{l,k}(\pi_{X}(\lambda))_{l,p}(\pi_{Y}(\lambda))_{p,m}(\pi_{\L^{-1}}(\lambda))_{m,k}\,.
    \]
      Since the matrices $(\pi_{X}(\lambda))_{l,p}, (\pi_{Y}(\lambda))_{p,m}$ are upper and lower diagonal we get
    \[
    (\pi_{XY}(\lambda))_{l,m}\neq 0\quad \text{only when} \quad l \in \{m+2,m,m-2\}\,.
    \]
    And in particular we get 
     \begin{eqnarray*}
      (\pi_{XY}(\lambda))_{m,m} & =  &  \sqrt{|\lambda|}\sqrt{\frac{m+1}{2}}\left(-i \sqrt{|\lambda|}\sqrt{\frac{m+1}{2}} \right)+\left(-\sqrt{|\lambda|}\sqrt{\frac{m}{2}} \right)\left(i \sqrt{|\lambda|}\sqrt{\frac{m}{2}} \right) \\
      & = & -i|\lambda|\frac{m+1}{2}-i|\lambda|\frac{m}{2}=-i \frac{|\lambda|}{2}(2m+1)\,,
    \end{eqnarray*}
 \begin{eqnarray*}
      (\pi_{XY}(\lambda))_{m,m+2} & =  &  \sqrt{|\lambda|}\sqrt{\frac{m+1}{2}}\left(i \sqrt{|\lambda|}\sqrt{\frac{m+2}{2}} \right)\\
      & = & i \frac{|\lambda|}{2}\sqrt{(m+1)(m+2)}\,,
    \end{eqnarray*}
    and 
     \begin{eqnarray*}
      (\pi_{XY}(\lambda))_{m,m-2} & =  &  \left(-\sqrt{|\lambda|}\sqrt{\frac{m}{2}}\right)\left(-i \sqrt{|\lambda|}\sqrt{\frac{m-1}{2}} \right)\\
      & = & i \frac{|\lambda|}{2}\sqrt{m(m-1)}\,.
    \end{eqnarray*}
    Since the matrix $(\pi_{\L^{-1}}(\lambda))_{m,k}$ is diagonal, we have 
    \[
    (\pi_{XY\L^{-1}}(\lambda))_{l,k}=\sum_{\zeta} (\pi_{XY}(\lambda))_{l,\zeta} (\pi_{\L^{-1}}(\lambda))_{\zeta,k} = (\pi_{XY}(\lambda))_{l,k} (\pi_{\L^{-1}}(\lambda))_{k,k}\,.
    \]
    By the above we get 
     \[
    (\pi_{XY\L^{-1}}(\lambda))_{l,k}\neq 0\quad \text{only when} \quad k \in \{l+2,l,l-2\}\,.
    \]
    We compute
    \begin{eqnarray*}
        (\pi_{XY\L^{-1}}(\lambda))_{l,l} & = & (\pi_{XY}(\lambda))_{l,l} (\pi_{\L^{-1}}(\lambda))_{l,l}\\
        & = &\left(-i \frac{|\lambda|}{2}(2l+1) \right)(|\lambda|^{-1}(2l+1)^{-1})\\
        & = & -\frac{i}{2}\,,
    \end{eqnarray*}
     \begin{eqnarray*}
        (\pi_{XY\L^{-1}}(\lambda))_{l,l+2} & = & (\pi_{XY}(\lambda))_{l,l+2} (\pi_{\L^{-1}}(\lambda))_{l+2,l+2}\\
        & = &\left(i \frac{|\lambda|}{2}\sqrt{(l+1)(l+2)} \right)(|\lambda|^{-1}(2l+5)^{-1})\\
        & = & \frac{i}{2}\frac{\sqrt{(l+1)(l+2)}}{(2l+5)}\,,
         \end{eqnarray*}
         and
           \begin{eqnarray*}
        (\pi_{XY\L^{-1}}(\lambda))_{l,l-2} & = & (\pi_{XY}(\lambda))_{l,l-2} (\pi_{\L^{-1}}(\lambda))_{l-2,l-2}\\
        & = &\left(i \frac{|\lambda|}{2}\sqrt{l(l-1)} \right)(|\lambda|^{-1}(2l-3)^{-1})\\
        & = & \frac{i}{2}\frac{\sqrt{l(l-1)}}{(2l-3)}\,.
         \end{eqnarray*}
   The last one can be also written as
   $(\pi_{XY\L^{-1}}(\lambda))_{l+2,l}=
   \frac{i}{2}\frac{\sqrt{(l+2)(l+1)}}{(2l+1)}$.
   
   Finally since $\frac{\sqrt{(l+1)(l+2)}}{(2l+5)}, \frac{\sqrt{(l+2)(l+1)}}{(2l+1)}<1 $, we get that $\max_{k,l}|(\pi_{XY\L^{-1}}(\lambda))_{l,k}|=\frac{1}{2}$, which in turn implies that
   \[
    \|XY \L^{-1}\|_{\text{op}}=\frac{1}{2}\,.
   \]
   We note that the formulae for components of vector fields $X$ and $Y$ in \eqref{px} and \eqref{py} differ by multiplication by $i$, so we can carry out the same argument as above for products $X_i X_j \L^{-1}$ and $Y_i Y_j \L^{-1}$, as well as for $Y_j X_j \L^{-1}$, with the same estimate,
   and the proof is complete.
\end{proof}
For the setting of the the Heisenberg group $\mathbb{H}^n$ in Section \ref{SEC:intro}, we have $\G=\mathbb{H}^n$ and $N=2n$.
In particular, Proposition \ref{Heis} implies that, the constant $C_0$ as in \eqref{C0} in the case of the Heisenberg group $\mathbb{H}^n$, is equal to $C_0=n2^{1/2}$, which in turn implies that the condition $\sqrt{1-\varepsilon}C_0<1$ as in Theorem \ref{theorem2} is satisfied for $\varepsilon \in (1-(1/2n^2),1)$. Hence, Theorem \ref{theorem2} implies:
\begin{cor}\label{cor.H}
 Let $f \in L^2(\HN)$ and let the coefficients $(c_{ij}) \in L^{\infty}(\HN)$, $i,j=1,\ldots,2n$,   satisfy the condition \eqref{ell} and the Cordes condition \eqref{cordes} for some $\varepsilon  \in (1-(1/2n^2),1)$, with $N=2n$. Then  the equation 
  \[
    \sum_{i,j=1}^{2n}c_{ij}(x) Z_i Z_j u=f\,, \quad f \in L^2(\HN)\,,
    \]
 where $Z_i=X_i$, $Z_{n+i}=Y_i$, $i=1,\ldots, n$,
 has a unique solution $u \in {{\dot{\mathcal{H}}^{2}_{\mathcal{L}}(\HN)}}$. 
\end{cor}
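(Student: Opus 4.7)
The plan is to deduce this statement as a direct specialization of Theorem \ref{theorem2} to the stratified group $\G=\mathbb{H}^n$, with the horizontal H\"ormander frame $\{Z_i\}_{i=1}^{2n}$ obtained by setting $Z_i=X_i$ for $1\leq i\leq n$ and $Z_{n+i}=Y_i$. Because the hypotheses $(c_{ij})\in L^\infty(\mathbb{H}^n)$, condition \eqref{ell} and the Cordes condition \eqref{cordes2} with $N=2n$ are assumed outright, the only nontrivial task is to verify the smallness condition $\sqrt{1-\varepsilon}\,C_0<1$ on the constant $C_0$ produced by the Miranda-Talenti estimate \eqref{MT} for the Heisenberg group.

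First, I would invoke Proposition \ref{Heis}: it furnishes Miranda-Talenti constants $C_{ij}$ for every pair of horizontal generators $Z_i,Z_j$ on $\mathbb{H}^n$, giving the uniform bound $\|Z_iZ_j u\|_{L^2(\mathbb{H}^n)}\le \tfrac{1}{2}\|\mathcal{L}u\|_{L^2(\mathbb{H}^n)}$. Plugging these values into the definition \eqref{C0}, and summing over all $1\leq i,j\leq 2n$ pairs, yields the explicit evaluation $C_0=n\,2^{1/2}$ as recorded in the paragraph preceding the corollary.

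With this value of $C_0$ in hand, the smallness condition from Theorem \ref{theorem2} becomes $\sqrt{1-\varepsilon}\cdot n\sqrt{2}<1$, which after squaring and rearranging is equivalent to $\varepsilon>1-\tfrac{1}{2n^2}$. This is precisely the range specified in the statement of the corollary, so the hypothesis of Theorem \ref{theorem2} is met. Applying that theorem therefore yields a unique solution $u\in\dot{\mathcal{H}}^{2}_{\mathcal{L}}(\mathbb{H}^n)$ of the equation $\sum_{i,j=1}^{2n}c_{ij}(x)Z_iZ_j u=f$, which is the desired conclusion.

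There is no genuine obstacle here since all the analytical work, namely the nearness argument, the contraction estimate and the sharp operator-norm computation $\|Z_iZ_j\mathcal{L}^{-1}\|_{\mathrm{op}}=\tfrac{1}{2}$, has already been carried out in Theorem \ref{theorem2} and Proposition \ref{Heis}. The only point meriting care is bookkeeping: one must keep in mind that the Heisenberg group has $N=2n$ horizontal generators (not $n$), so that the Cordes quotient in \eqref{cordes2} is taken with $N=2n$ and the sum defining $C_0^2$ runs over $(2n)^2$ indices; these are exactly the conventions adopted in the statement.
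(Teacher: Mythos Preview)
Your proposal is correct and follows exactly the paper's own argument: the paper derives the corollary in the paragraph immediately preceding it by invoking Proposition~\ref{Heis} to obtain $C_{ij}=\tfrac12$, computing $C_0=n\,2^{1/2}$ from \eqref{C0}, and then observing that $\sqrt{1-\varepsilon}\,C_0<1$ reduces to $\varepsilon>1-\tfrac{1}{2n^2}$, so that Theorem~\ref{theorem2} applies. There is nothing to add or modify.
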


\section{Acknowledgements}
Marianna Chatzakou and Michael Ruzhansky  are supported by the FWO Odysseus 1 grant G.0H94.18N: Analysis and Partial Differential Equations and by the Methu\-salem programme of the Ghent University Special Research Fund (BOF) (Grant number 01M01021). Marianna Chatzakou is a
postdoctoral fellow of the Research Foundation – Flanders (FWO) under the postdoctoral grant No 12B1223N. Michael Ruzhansky is also supported by EPSRC grant EP/R003025/2 and FWO Senior Research Grant G022821N.

\thanks{There is no conflict of interest. }

\end{document}